\documentclass[12pt, reqno]{amsart}

\setlength{\tabcolsep}{20pt}

\usepackage{amsmath, amsthm, amssymb, stmaryrd}
\usepackage{hyperref}
\usepackage{enumerate}
\usepackage{url}

\usepackage{color}

\usepackage{graphicx}
\usepackage{multirow, array}

\usepackage[centering, margin={1in, 0.5in}, includeheadfoot]{geometry}

\usepackage{fancyvrb}

\newtheorem{thm}{Theorem}[section]
\newtheorem{lemma}[thm]{Lemma}

\newtheorem{cor}[thm]{Corollary}
\newtheorem{conj}[thm]{Conjecture}

\theoremstyle{definition}

\newtheorem{example}[thm]{Example}

\theoremstyle{remark}

\numberwithin{equation}{section}

\def\lam{{\lambda}}

\def\eps{\varepsilon}

\def\le{\leqslant} \def\ge{\geqslant}

\def \bN {\mathbb N}

\def \bR {\mathbb R}
\def \bZ {\mathbb Z}


\begin{document}
\title[On Fibonacci partitions]{On Fibonacci partitions}
\subjclass[2010]{11B39 (primary); 05A16, 05A17 (secondary)}
\keywords{Fibonacci numbers, partitions}
\author{Sam Chow \and Tom Slattery}
\address{Mathematics Institute, Zeeman Building, University of Warwick, Coventry CV4 7AL, United Kingdom}
\email{Sam.Chow@warwick.ac.uk}
\address{Mathematics Institute, Zeeman Building, University of Warwick, Coventry CV4 7AL, United Kingdom}
\email{t.slattery@warwick.ac.uk}

\maketitle

{\centering\footnotesize \em{To Carl Pomerance, a legend of number theory} \par}

\begin{abstract} We prove an exact formula for OEIS A000119, which counts partitions into distinct Fibonacci numbers. We also establish an exact formula for its mean value, and determine the asymptotic behaviour.
\end{abstract}

\section{Introduction}

For $n \in \bZ_{\ge 0}$, let $R(n)$ be the number of solutions to
\[
x_1 + \cdots + x_s = n,
\]
where $s \in \bZ_{\ge 0}$, and $x_1 < x_2 < \cdots < x_s$ are Fibonacci numbers. Note that $R(0) = 1$. We call $R$ the \emph{Fibonacci partition function}, as it counts partitions into distinct Fibonacci numbers. It has existed since the very first volume of the Fibonacci Quarterly in 1963, see \cite{HB1963}, and its values comprise the sequence OEIS A000119. In 1968, Leonard Carlitz \cite[Theorem 2]{Car1968} showed that 
\begin{equation} \label{Carlitz}
R(F_m) = \lfloor m/2 \rfloor \qquad (m = 2,3,\ldots),
\end{equation}
where $F_1 = F_2 = 1$, $F_3 = 2$, and if $m \in \bN$ then $F_m$ denotes the $m^\mathrm{th}$ Fibonacci number. Many authors have investigated Fibonacci partitions, and the topic has received attention over several decades \cite{Ard2004, Berstel, Car1968, HB1963, Kla1968, Rob1996, Stockmeyer, Wei2016}. In general $R(n)$ behaves erratically. 

\begin{figure}[!htb] \label{OriginalFigure}
  \centering
\includegraphics[width=8cm]{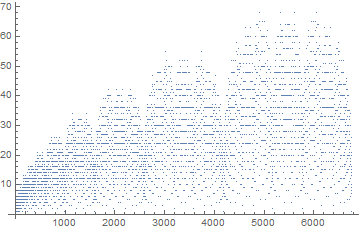}
  \caption{$R(n)$ against $n$ for $n=0,1,\ldots,6765$}
\end{figure}

Our first result is an exact formula for $R(n)$. Recall Zeckendorf's theorem \cite{Knuth, Lekkerkerker, Zeckendorf}, which asserts that each positive integer has a unique representation as a sum of non-consecutive Fibonacci numbers, called the \emph{Zeckendorf expansion}. 

\begin{thm} \label{thm1}
Let
\[
H = F_{m_0} + F_{m_1} + \cdots + F_{m_k}
\]
be the Zeckendorf expansion of $H \in \bN$, where
\[
m_{i-1} - m_i \ge 2 \qquad (1 \le i \le k), \qquad m_k \ge 2.
\]
Write
\[
x_\ell = F_{m_\ell} + \cdots + F_{m_k} \quad (0 \le \ell \le k+1)
\]
and
\[
t_i = \Big \lfloor \frac{m_{i-1} - m_i + 2}2 \Big \rfloor,
\quad
\eps_i = 2t_i - 1 - m_{i-1} + m_i \qquad (1 \le i \le k).
\]
Finally, let
\[
a_0 = 1, \qquad a_1 = t_1, \qquad
a_{\ell +1} = t_{\ell +1}a_{\ell} - \eps_\ell a_{\ell-1} \quad (1 \le \ell \le k-1).
\]
Then
\begin{equation} \label{ExactFormula}
R(H) =
\begin{cases}
a_k \lfloor m_k/2  \rfloor - \eps_k a_{k-1}, &\text{if } k \ge 1 \\
\lfloor m_0/2  \rfloor, &\text{if } k = 0.
\end{cases}
\end{equation}
\end{thm}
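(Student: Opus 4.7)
The plan is to prove Theorem \ref{thm1} in two stages: first establish a three-term recurrence for the suffix quantities $R(x_\ell)$, then unfold it algebraically using the defining relation for $a_\ell$. Concretely, I would prove
\[
R(x_\ell) \;=\; t_{\ell+1}\, R(x_{\ell+1}) \;-\; \eps_{\ell+1}\, R(x_{\ell+2}) \qquad (0 \le \ell \le k-1),
\]
with the boundary conventions $R(x_{k+1}) = R(0) = 1$ and $R(x_k) = R(F_{m_k}) = \lfloor m_k/2 \rfloor$ from Carlitz's \eqref{Carlitz}. Since each suffix $x_\ell$ has a Zeckendorf expansion that is simply a truncation, it suffices to prove the $\ell = 0$ case,
\[
R(H) \;=\; t_1\, R(H - F_{m_0}) \;-\; \eps_1\, R(H - F_{m_0} - F_{m_1}).
\]
Given this, a short induction on $\ell$ shows that $R(H) = a_\ell R(x_\ell) - \eps_\ell a_{\ell-1} R(x_{\ell+1})$ for $1 \le \ell \le k$: the base $\ell = 1$ is the recurrence itself (using $a_1 = t_1$, $a_0 = 1$), and the step follows by substituting the recurrence for $R(x_\ell)$ and invoking $a_{\ell+1} = t_{\ell+1} a_\ell - \eps_\ell a_{\ell-1}$. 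Setting $\ell = k$ yields \eqref{ExactFormula}.

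To prove the key recurrence, I would classify representations of $H$ by their largest Fibonacci summand $F_M$. The identity $F_2 + F_3 + \cdots + F_n = F_{n+2} - 2$ forces $M \in \{m_0 - 1, m_0\}$, since any $M \le m_0 - 2$ would give total at most $F_{m_0} - 2 < H$. The $M = m_0$ representations contribute $R(H - F_{m_0})$ directly. For the $M = m_0 - 1$ count, the remaining sum $F_{m_0 - 2} + (H - F_{m_0})$ must be represented using parts $\le F_{m_0 - 2}$; iterating the same dichotomy, such a representation either uses $F_{m_0 - 2}$ as its top, or absorbs $F_{m_0 - 2} = F_{m_0 - 3} + F_{m_0 - 4}$ and recurses on $F_{m_0 - 4} + (H - F_{m_0})$ with parts $\le F_{m_0 - 4}$. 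Stepping down the Fibonacci indices by twos, one obtains a chain with $t_1$ terminal ``take the top'' options in total (counting the initial $M = m_0$).

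The hard part is the chain's termination, governed by the parity of $d = m_0 - m_1$. When $d$ is odd, each terminal option imposes a vacuous constraint on the residual representation of $H - F_{m_0}$ (the threshold $F_{m_0 - 2j - 1}$ always dominates $F_{m_1}$), and the bottom of the chain is infeasible; the total is $t_1\, R(H - F_{m_0})$, matching $\eps_1 = 0$. When $d$ is even, the last terminal option forces the representation of $H - F_{m_0}$ to avoid $F_{m_1}$. Since representations of $H - F_{m_0}$ that use $F_{m_1}$ biject with unconstrained representations of $H - F_{m_0} - F_{m_1}$ (the bound $< F_{m_1}$ is automatic as $H - F_{m_0} - F_{m_1} < F_{m_1 - 1}$ by Zeckendorf), the final option contributes $R(H - F_{m_0}) - R(H - F_{m_0} - F_{m_1})$, and the total $t_1\, R(H - F_{m_0}) - R(H - F_{m_0} - F_{m_1})$ matches $\eps_1 = 1$. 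The principal technical burden is this chain-termination analysis, particularly verifying the infeasibility at the bottom and the precise count in the final step of the even-parity case.
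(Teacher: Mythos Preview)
Your approach is essentially the paper's own. The paper proves Theorem \ref{thm1} by citing Robbins's recursion \cite[Theorem 4]{Rob1996}, which is exactly your identity $R(H) = t_1 R(x_1) - \eps_1 R(x_2)$, and then performs the very same induction on $\ell$ that you describe to obtain $R(H) = a_\ell R(x_\ell) - \eps_\ell a_{\ell-1} R(x_{\ell+1})$, finishing with Carlitz's formula at $\ell = k$. The only difference is that you supply a direct combinatorial proof of Robbins's lemma (the chain/parity analysis) rather than quoting it; your sketch is correct, and in fact tracks Robbins's original argument closely.
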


\noindent Throughout, we adopt the standard convention that empty sums are 0, so $x_{k+1} = 0$ above.

Carlitz \cite{Car1968} had a recursive formula, but on attempting to produce a non-recursive formula found that ``the general case is very complicated''. Robbins \cite{Rob1996} had a simpler recursive formula, leading to an algorithm used to produce some initial values of $R(H)$, but also did not write down a non-recursive formula. Weinstein \cite{Wei2016} obtained a non-recursive expression, albeit a complicated one. The nicest formula that we could find in the literature is that of Berstel \cite[Proposition 3.1]{Berstel} which, being a product of $2 \times 2$ matrices, is quite similar to Theorem \ref{thm1}. Our formula \eqref{ExactFormula} is extremely efficient in practice. For example, it can compute $R(10^{100})$ in less than one second on a standard laptop computer. \emph{Mathematica} \cite{Wolfram} code for this is provided in Appendix \ref{Rcode}. Theorem \ref{thm1} follows readily from Robbins's recursion, so it is not our main result by any means.

\bigskip

We also study the mean value
\[
M(H) := H^{-1} \sum_{n = 0}^H R(n) \qquad (H \in \bN),
\]
or equivalently the summatory function
\[
A(H) := \sum_{n = 0}^H R(n) \qquad (H \in \bZ).
\]
For $t \in \bN$, let
\begin{equation} \label{fdef}
f(t) = 1 + \frac{2(4^{t-1} -1)}3.
\end{equation}
We establish the following exact formula for $A(H)$.

\begin{thm} \label{SuperDuperRecursion}
Let $H \in \bN$, and let the values of the $x_\ell$, $t_i$, $\eps_i$ and $a_\ell$ be as in Theorem \ref{thm1}.
Then for $\ell = 1,2,\ldots,k$ we have
\begin{equation} \label{GeneralRecursion}
A(H) = a_\ell A(x_\ell) - \eps_\ell a_{\ell - 1} A(x_{\ell+1})
+ 
\sum_{i \le \ell} a_{i-1} f(t_i) 2^{m_{i-1} - 2t_i}.
\end{equation}
In particular
\begin{equation} \label{FinalFormula}
A(H) =
\begin{cases}
a_k \Bigg \lfloor \frac{2^{m_k}}6 + \frac{m_k + 1}2 \Bigg \rfloor - \eps_k a_{k-1} + \displaystyle
\sum_{i \le k} a_{i-1} f(t_i) 2^{m_{i-1} - 2t_i}, &\text{if } k \ge 1 \\
\Bigg \lfloor \frac{2^{m_0}}6 + \frac{m_0 + 1}2 \Bigg \rfloor, &\text{if } k = 0.
\end{cases}
\end{equation}
\end{thm}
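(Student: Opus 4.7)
The approach is induction on $\ell$ to establish \eqref{GeneralRecursion}, after which \eqref{FinalFormula} is immediate on taking $\ell = k$, using $x_{k+1} = 0$ and $A(0) = R(0) = 1$, and inserting the $k = 0$ closed form $A(F_m) = \lfloor 2^m/6 + (m+1)/2 \rfloor$ for the residual $A(x_k) = A(F_{m_k})$. This $k = 0$ case can be checked by a short separate induction on $m$, comparing $A(F_{m+1}) - A(F_m)$ to a sum of $R$ values over a Fibonacci-length interval. The entire induction is driven by a single ``one-layer'' identity,
\begin{equation} \label{OneLayer}
A(H) = t_1 A(x_1) - \eps_1 A(x_2) + f(t_1) 2^{m_0 - 2t_1},
\end{equation}
namely the $\ell = 1$ case of \eqref{GeneralRecursion}, since $a_0 = 1$ and $a_1 = t_1$.

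\textbf{Establishing \eqref{OneLayer}.} To prove \eqref{OneLayer}, I would partition $[0, H]$ according to the largest Fibonacci appearing in the Zeckendorf expansion of $n$. Partitions of $n \in [0, H]$ that use $F_{m_0}$ are in bijection with partitions of $n - F_{m_0} \in [0, x_1]$ using Fibonacci numbers at most $F_{m_0 - 2}$; after $t_1$ iterations of this reduction (corresponding to walking from $F_{m_0}$ down to $F_{m_1}$ in steps of two indices), the Robbins-style combination $t_1 A(x_1) - \eps_1 A(x_2)$ emerges from the pointwise recursion for $R(n)$ summed termwise. The remaining partitions---those whose leading Fibonacci lies strictly between $F_{m_1 + 1}$ and $F_{m_0}$ but which are not captured above---can be enumerated by a finite geometric series, and the definition $f(t) = 1 + 2(4^{t-1} - 1)/3 = (2 \cdot 4^{t-1} + 1)/3$ in \eqref{fdef} is calibrated precisely so that this count equals $f(t_1) \, 2^{m_0 - 2t_1}$.

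\textbf{Inductive step and final formula.} Assume \eqref{GeneralRecursion} for some $\ell < k$. Applying \eqref{OneLayer} to $x_\ell = F_{m_\ell} + x_{\ell+1}$, whose Zeckendorf expansion begins with $F_{m_\ell}$, gives
\[
A(x_\ell) = t_{\ell+1} A(x_{\ell+1}) - \eps_{\ell+1} A(x_{\ell+2}) + f(t_{\ell+1}) 2^{m_\ell - 2t_{\ell+1}}.
\]
Substituting into \eqref{GeneralRecursion} and invoking the Chebyshev-type recursion $a_{\ell+1} = t_{\ell+1} a_\ell - \eps_\ell a_{\ell-1}$ from Theorem \ref{thm1}, the coefficient of $A(x_{\ell+1})$ collapses to $t_{\ell+1} a_\ell - \eps_\ell a_{\ell-1} = a_{\ell+1}$, the coefficient of $A(x_{\ell+2})$ becomes $-\eps_{\ell+1} a_\ell$, and the inhomogeneous part acquires the new summand $a_\ell f(t_{\ell+1}) 2^{m_\ell - 2t_{\ell+1}}$---exactly the shape of \eqref{GeneralRecursion} at level $\ell + 1$. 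At $\ell = k$ this yields \eqref{FinalFormula} after the substitutions mentioned above.

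\textbf{Main obstacle.} The hard part will be pinning down the exact value of the inhomogeneous term $f(t_1) 2^{m_0 - 2t_1}$ in \eqref{OneLayer}. Robbins-type recursions give the two $A$-terms essentially for free by pointwise summation, but identifying the boundary constant requires careful bookkeeping of partitions whose topmost Fibonacci sits strictly below $F_{m_0}$, handling separately the parity-dependent remainder (captured by $\eps_1 \in \{-1, 0, 1\}$), and recognising the resulting geometric-in-$t_1$ expression as $f(t_1) \cdot 2^{m_0 - 2t_1}$. Once \eqref{OneLayer} is in hand, the remainder of the proof is an algebraic unfolding driven by the Chebyshev-type recursion for the $a_\ell$.
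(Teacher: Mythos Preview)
Your overall architecture is exactly the paper's: prove a one-layer identity (your \eqref{OneLayer}), induct on $\ell$ using the recursion $a_{\ell+1} = t_{\ell+1} a_\ell - \eps_\ell a_{\ell-1}$, and then substitute the closed form for $A(F_{m_k})$ and $A(0)=1$ to get \eqref{FinalFormula}. Your inductive step and the final substitution are correct and match the paper line for line.

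The weak spot is your plan for the one-layer identity. You propose to ``partition $[0,H]$ according to the largest Fibonacci appearing in the Zeckendorf expansion of $n$'' and then obtain $t_1 A(x_1) - \eps_1 A(x_2)$ by ``summing the pointwise recursion for $R(n)$ termwise''. This does not work as stated: Robbins's recursion $R(n) = t(n) R(x_1(n)) - \eps(n) R(x_2(n))$ has parameters that depend on the Zeckendorf expansion of each individual $n$, so summing over $n \le H$ does not produce a single pair $(t_1, \eps_1)$ tied to the Zeckendorf data of $H$. You are also conflating ``largest Fibonacci in the Zeckendorf expansion of $n$'' with ``largest Fibonacci in a partition of $n$'' --- only the latter is relevant here.

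The paper sidesteps this by working directly with the combinatorial interpretation of $A(H)$ as the number of tuples of distinct Fibonacci numbers with sum at most $H$. Case-splitting on the largest element $x_s$ of such a tuple (is it $F_m$, $F_{m-1}$, or at most $F_{m-2}$?) yields the basic recursion
\[
A(H) = A(H - F_m) + A(H - F_{m-1}) - A(H - 2F_{m-1}) + 2^{m-3}
\qquad (F_m \le H < F_{m+1}),
\]
and iterating this $t$ times (an easy induction on $t$) gives precisely \eqref{OneLayer} with the inhomogeneous term $f(t_1) 2^{m_0 - 2t_1}$ falling out of a geometric sum of the $2^{m-3}$ contributions. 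This is the step you flagged as the main obstacle, and it is indeed where the content lies --- but the route through a direct $A$-recursion, rather than summing an $R$-recursion, is what makes it tractable.

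One small correction: from the definition $\eps_i = 2t_i - 1 - (m_{i-1} - m_i)$ with $t_i = \lfloor (m_{i-1} - m_i + 2)/2 \rfloor$, you always have $\eps_i \in \{0,1\}$, not $\{-1,0,1\}$.
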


This enables us to understand the asymptotic behaviour of $A(H)$ and $M(H)$. Put
\[
\varphi = \frac{1+\sqrt 5}2, \qquad \lam = \frac{\log 2}{\log \varphi} \approx 1.44,
\]
and define
\[
c_1 = \liminf_{H \to \infty} \frac{A(H)} {H^{\lam}},
\qquad c_2 = \limsup_{H \to \infty} \frac{A(H)} {H^{\lam}}.
\]
We now present our main result.

\begin{thm} [Main Theorem] \label{AsymptoticBehaviour} We have
\[
c_1 = 0.52534\ldots, \qquad c_2 = 0.54338\ldots,
\]
and more precisely
\begin{equation} \label{close}
0.525347 < c_1 < 0.525349,
\qquad
0.5433878 < c_2 < 0.5433893.
\end{equation}
\end{thm}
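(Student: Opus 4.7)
My plan combines the exact formula in Theorem~\ref{SuperDuperRecursion} with a scaling argument and a reformulation as an optimisation problem over Zeckendorf gap sequences. Setting $d_i := m_{i-1}-m_i\ge 2$, Binet's formula $F_m=\varphi^m/\sqrt 5+O(\varphi^{-m})$ yields, as $m_0\to\infty$ with the gap sequence $\bd=(d_1,d_2,\ldots)$ held fixed,
\[
\frac{H}{\varphi^{m_0}}\longrightarrow \frac{\beta(\bd)}{\sqrt 5},\qquad \beta(\bd):=\sum_{i\ge 0}\varphi^{-(d_1+\cdots+d_i)},
\]
and since $\varphi^\lambda=2$ by the very definition of $\lambda$, we get $H^\lambda/2^{m_0}\to(\beta(\bd)/\sqrt 5)^\lambda$. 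A straightforward induction on $a_{\ell+1}=t_{\ell+1}a_\ell-\eps_\ell a_{\ell-1}$ gives the bound $a_\ell\le 2^{(d_1+\cdots+d_\ell)/2}$, which is enough for \eqref{FinalFormula} to produce $A(H)/2^{m_0}\to\alpha(\bd)$ for a convergent explicit series whose terms decay like $2^{-(d_1+\cdots+d_{i-1})/2}$. Consequently
\[
\frac{A(H)}{H^\lambda}\longrightarrow \Phi(\bd):=5^{\lambda/2}\,\alpha(\bd)/\beta(\bd)^\lambda,
\]
quantitatively with an error of order $\varphi^{-m_k}$ coming from the Binet error and the tail of the series for $\alpha$.

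This reduces the theorem to a variational problem. Extending $\Phi$ continuously to the compactification $\cD:=\{2,3,\ldots,\infty\}^{\bN}$ (the symbol $d_i=\infty$ encoding termination of the Zeckendorf expansion at position $i-1$), every positive integer $H$ is attached to a sequence in $\cD$, and one obtains
\[
c_1=\min_{\bd\in\cD}\Phi(\bd),\qquad c_2=\max_{\bd\in\cD}\Phi(\bd),
\]
both attained by compactness. The quantitative error $O(\varphi^{-m_k})$ in Step~1 ensures that the strict inequalities in \eqref{close} follow once the optimisers are computed to sufficient precision.

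The hard part will therefore be pinning down the extremising sequences $\bd^\pm\in\cD$. To do this I would study the dependence of $\Phi(\bd)$ on a single coordinate $d_i$ with the other $d_j$ frozen: because $\alpha$ and $\beta$ are explicit (geometric in $\varphi^{-d_i}$, with $a_\ell$ governed by the linear recursion $a_{\ell+1}=t_{\ell+1}a_\ell-\eps_\ell a_{\ell-1}$), this one-coordinate dependence can be analysed in closed form and its optimiser read off from the sign of a derivative or discrete difference. I expect the analysis to force $\bd^\pm$ to be eventually periodic with small period. Once $\bd^\pm$ is isolated, $\alpha(\bd^\pm)$ and $\beta(\bd^\pm)$ satisfy finite-order linear recurrences with known coefficients, so $\Phi(\bd^\pm)$ becomes effectively computable to arbitrary precision via solving the associated characteristic equations; combined with the error estimate from Step~1, this yields the bounds \eqref{close}.
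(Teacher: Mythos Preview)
Your reduction to a variational problem is sound and is, in spirit, exactly what the paper does: the asymptotic $A(H)/2^{m_0}\to\alpha(\bd)$ and $H/\varphi^{m_0}\to\beta(\bd)/\sqrt5$ are the content of the paper's computation of $v_j$ and $w_j$ at the subdivision endpoints, and the compactification $\cD=\{2,3,\ldots,\infty\}^\bN$ is a clean way to say that the infimum and supremum are determined by initial Zeckendorf digit patterns. Your bound $a_\ell\le 2^{(d_1+\cdots+d_\ell)/2}$ is correct and gives the required convergence.

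The genuine gap is your Step~3. The paper does \emph{not} attempt to identify the extremising sequences $\bd^\pm$ analytically; instead it fixes a depth (here $27$), enumerates all $F_{28}=317811$ admissible prefixes, and uses monotonicity of $A$ to sandwich $A(H)/H^\lambda$ on each resulting subinterval between computable quantities $L_j$ and $U_j$. The bounds \eqref{close} then drop out of a finite computer search. Your plan to locate $\bd^\pm$ by one-coordinate optimisation and then argue eventual periodicity is speculative on two counts. First, coordinate descent need not find global extrema of a non-separable function, and $\Phi$ is highly non-separable: changing $d_i$ perturbs every $a_\ell$ with $\ell\ge i$ through the coupled recursion $a_{\ell+1}=t_{\ell+1}a_\ell-\eps_\ell a_{\ell-1}$. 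Second, the paper's numerics do not support simple periodicity: the minimising prefix has gaps $(7,5,6,7,\ldots)$ and the maximising prefix has gaps $(3,2,3,2,2,4,2,3,2,3,\ldots)$, neither of which is visibly periodic at this depth. (Compare the paper's earlier example with gaps $(7,5,7)$, which gives $\Phi\approx 0.525352$, already strictly above the true $c_1$.) So the ``I expect'' in your proposal is doing all the work, and there is no evident mechanism that would force the periodicity you need. Absent that, you have no route to the specific inequalities in \eqref{close}.

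If you want to salvage the analytic flavour, one option is to keep your compactness reduction but replace the periodicity hope with a rigorous finite truncation: show that $|\Phi(\bd)-\Phi(\bd')|$ is controlled by $2^{-(d_1+\cdots+d_N)/2}$ whenever $\bd$ and $\bd'$ agree in the first $N$ coordinates (your $a_\ell$ bound essentially gives this), and then you recover exactly the paper's subdivision argument in disguise.
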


\noindent It follows that $A(H) \asymp H^\lam$. Subject to hardware constraints, our method computes $c_1$ and $c_2$ to arbitrary precision. 

\begin{figure}[!htb]
  \centering
\includegraphics[width=8cm]{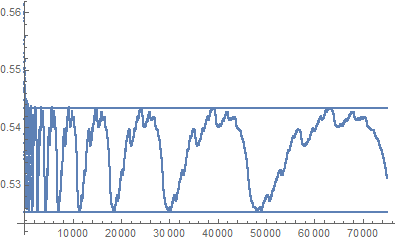}
  \caption{$A(H) / H^{\lam}$ against $H$ for $H=0,1,\ldots,75025$, with horizontal lines at $0.525348$ and $0.543388$.}
\end{figure}

\bigskip

The Fibonacci partition function behaves very differently to the usual partition function $p(n)$, for which there is a nice asymptotic formula
\[
p(n) \sim \frac{1}{4n \sqrt 3} \exp ( \pi \sqrt {2n/3})
\]
going back to Hardy and Ramanujan \cite{HR1918}; see also \cite[\S 5]{Andrews}. Our work shows that even the mean value $M(H)$ of the Fibonacci partition function does not have a `nice' asymptotic formula, however we are able to describe the asymptotic behaviour fairly well.

The logarithmic average of $R(n) n^{1-\lam}$, namely
\[
B(H) := (\log H)^{-1} \sum_{n \le H} \frac{R(n)}{n^\lam} \qquad (H \ge 2),
\]
might be better behaved. Breaking into ranges $I_m = (F_m, F_{m+1}]$, wherein
\[
\frac{A(F_{m+1}) - A(F_m)}{F_{m+1}^\lam}
\le\sum_{n \in I_m} \frac{R(n)}{n^\lam} \le \frac{A(F_{m+1}) - A(F_m)}{F_m^\lam},
\]
it follows from Theorem \ref{AsymptoticBehaviour} that 
\[
B(H) \asymp 1.
\]
Though $B(H)$ is not decreasing, it does exhibit a clear downward trend. 

\begin{conj} There exists $B > 0$ such that
\[
B(H) \to B \qquad (H \to \infty).
\]
\end{conj}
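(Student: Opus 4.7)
The plan is to first apply partial summation to reduce the conjecture to a statement about the Cesàro average of the normalised summatory function $A(t)/t^\lam$. Abel's identity gives
\[
\sum_{n=1}^H \frac{R(n)}{n^\lam} = \frac{A(H) - 1}{H^\lam} + \lam \int_1^H \frac{A(t) - 1}{t^{\lam+1}} \, dt,
\]
and Theorem \ref{AsymptoticBehaviour} ensures $A(H)/H^\lam = O(1)$, so the boundary term contributes $o(\log H)$ and drops out after dividing by $\log H$. Substituting $u = \log t$, the conjecture becomes equivalent to
\[
\frac{\lam}{\log H} \int_0^{\log H} g(u) \, du \longrightarrow B, \qquad g(u) := \frac{A(e^u)}{e^{u\lam}},
\]
where $A$ is extended to a step function. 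Thus the task is to prove convergence of a logarithmic Cesàro average of a bounded, oscillating function on $[0,\infty)$.

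Next I would establish an approximate $\log \varphi$-periodicity of $g$ using Theorems \ref{thm1} and \ref{SuperDuperRecursion}. Given $H$ with Zeckendorf expansion $H = F_{m_0} + \cdots + F_{m_k}$, define the shifted integer $H' = F_{m_0+1} + \cdots + F_{m_k+1}$: because every gap $m_{i-1} - m_i$ is preserved, the auxiliary quantities $t_i, \eps_i, a_\ell$ are identical for $H$ and $H'$. Feeding the shifted indices into \eqref{FinalFormula}, each dominant $2^{m_{i-1} - 2t_i}$ term doubles, while $(H'/H)^\lam \to \varphi^\lam = 2$ as $m_k \to \infty$. A careful bookkeeping of the $O(a_k)$ and $O(a_{i-1})$ correction terms should yield a quantitative estimate
\[
\left| \frac{A(H')}{(H')^\lam} - \frac{A(H)}{H^\lam} \right| = O(F_{m_k}^{-\delta})
\]
for some $\delta > 0$, expressing that $g$ becomes nearly $\log \varphi$-periodic along the natural Zeckendorf shift $H \mapsto H'$.

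From this quasi-periodicity I would try to extract Cesàro convergence. A dynamical viewpoint is natural: the Zeckendorf digit string of an integer close to $H$ realises a point of the golden-mean subshift $\{0,1\}^{\bN}$ forbidding two consecutive $1$'s, and $H \mapsto H'$ is a concrete instance of the associated shift. Given a suitable shift-invariant probability measure $\mu$ on this subshift, Birkhoff's theorem would identify
\[
B = \lam \int g \, d\mu,
\]
for a limit function $g$ on the subshift. Alternatively one can work directly: split $[1,H]$ into the blocks $I_m = (F_m, F_{m+1}]$, expand each $A(F_{m+1}) - A(F_m)$ via Theorem \ref{SuperDuperRecursion}, and seek a self-similar telescoping that reduces the logarithmic average to a computable sum.

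The hard part, I expect, is exactly the passage from quasi-periodicity to true convergence. The function $g(u)$ does not depend only on $u \bmod \log \varphi$ but on the full Zeckendorf expansion of $\lfloor e^u \rfloor$, which is of unbounded complexity, so approximate periodicity on individual shift-orbits is not enough. One needs either a second-moment / variance argument in the spirit of the extremal analysis that yields $c_1$ and $c_2$ in Theorem \ref{AsymptoticBehaviour}, controlling how the fluctuations of $g$ decay on a logarithmic scale, or machinery from the ergodic theory of $\beta$-numeration and substitutive systems (Parry, Rényi, Fogg) to carry out the averaging rigorously. A successful argument along either line should additionally yield an explicit series expression for $B$, opening the door to computing it to arbitrary precision by the methods of Theorem \ref{AsymptoticBehaviour}.
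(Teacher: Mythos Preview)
The statement you are attempting to prove is stated in the paper as a \emph{conjecture}, not a theorem; the authors offer no proof and explicitly leave it open. So there is no ``paper's own proof'' to compare against.

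Your proposal is a coherent strategy outline rather than a proof. The partial-summation reduction to a Ces\`aro average of $g(u) = A(e^u)/e^{u\lam}$ is correct and standard, and the observation that the Zeckendorf shift $H \mapsto H'$ preserves all the $t_i, \eps_i, a_\ell$ so that the dominant terms in \eqref{FinalFormula} double is a genuinely useful structural insight. But you yourself identify the gap: quasi-periodicity along individual shift orbits does not by itself imply convergence of the Ces\`aro mean, because $g$ depends on the full Zeckendorf tail. Your two suggested routes --- a variance/second-moment estimate, or ergodic theory of the golden-mean subshift with a suitable invariant measure --- are both plausible, but neither is carried out. In particular, for the ergodic route you would need to (i) identify the correct measure (presumably the Parry measure of maximal entropy on the golden-mean shift), (ii) show that the map from $u$ to the Zeckendorf digit sequence of $\lfloor e^u \rfloor$ equidistributes with respect to that measure in a sense strong enough to transfer Birkhoff averages, and (iii) verify that the limit function on the subshift is integrable. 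None of these steps is routine, and step (ii) in particular is where the real content lies.

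In short: reasonable programme, honestly self-assessed, but at present it is a research plan for an open problem rather than a proof.
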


We also invite the enthusiastic reader to consider:
\begin{enumerate}
\item Higher moments of the Fibonacci partition function
\item Lucas partitions \cite{CSH1972, Kla1966}
\item Partitions into distinct terms of a sequence $(\lfloor \tau^m \rfloor)_{m=1}^\infty$, where $\tau \in (1,2) $ is fixed
\item Partitions into distinct terms of a Piatetski-Shapiro sequence $(\lfloor m^\tau \rfloor)_{m=1}^\infty$, where $\tau > 1$ is fixed, cf. for polynomials \cite{DR2018, Gafni}
\item Partitions into distinct Piatetski--Shapiro primes, cf. \cite{Kumchev, Vaughan}.
\end{enumerate}

\subsection*{Methods} 

We deduce Theorem \ref{thm1} by iterating Robbins's recursion \cite[Theorem 4]{Rob1996}. For Theorem \ref{SuperDuperRecursion}, we begin with the observation that $A(H)$ counts sets of distinct Fibonacci numbers whose sum is at most $H$. This enables us to prove a combinatorial recursion analogous to that of Robbins. By systematic applications of our recursion, we prove an exact formula for $A(H)$ in terms of the Zeckendorf expansion of $H$. Finally, for $m \in \bN$ large, we subdivide $[F_m, F_{m+1}) \cap \bZ$ into many discrete subintervals, according to the initial Zeckendorf digits. By estimating $A(H)$ at the endpoints of these subintervals, we are able to compute $c_1$ and $c_2$ to arbitrary precision, subject to hardware constraints. We used the software \emph{Mathematica} \cite{Wolfram} to perform the calculations, leading to Theorem \ref{AsymptoticBehaviour}. 

\subsection*{Notation} As usual, empty sums are 0. We adopt the following standard asymptotic notations: if $f,g: \bN \to \bR_{> 0}$, we write
\begin{align*}
f(m) \sim g(m)
\qquad
\text{if}
\qquad
\lim_{m \to \infty} \frac{f(m)}{g(m)} = 1,\\
f(m) = o(g(m)) \qquad \text{if} \qquad 
\lim_{m \to \infty} \frac{f(m)}{g(m)} = 0,
\end{align*}
and 
\[
f(m) \asymp g(m)
\qquad
\text{if}
\qquad
0 < \liminf_{m \to \infty} \frac{f(m)}{g(m)} \le \limsup_{m \to \infty} \frac{f(m)}{g(m)} < \infty.
\]
In words, the first notion is that $f$ is asymptotic to $g$, the second notion is that $f$ has a smaller asymptotic order of magnitude than $g$, and the third notion is that $f$ and $g$ have the same asymptotic order of magnitude.

\subsection*{Organisation} We prove Theorems \ref{thm1}, \ref{SuperDuperRecursion} and \ref{AsymptoticBehaviour} in Sections \ref{S2}, \ref{S3} and \ref{S4}, respectively. The appendices contain the code that we used for the computations.

\subsection*{Funding}

SC was supported by EPSRC Fellowship Grant EP/S00226X/2. TS was supported by a URSS bursary from the University of Warwick. 

\section{An exact formula for Fibonacci partitions} \label{S2}

In this section, we prove Theorem \ref{thm1}. With the notation of Theorem \ref{thm1}, Robbins \cite[Theorem 4]{Rob1996} established the following recursion.

\begin{lemma} [Robbins] \label{Robbins}
If $H \ge 2$ and $k \ge 1$ then
\[
R(H) = t_1 R(x_1) - \eps_1 R(x_2).
\]
\end{lemma}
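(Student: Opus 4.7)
The plan is to prove the recursion combinatorially by tracking Fibonacci representations through an auxiliary restricted count. For $N \ge 0$ and $j \ge 1$, define
\[
G(N,j) := \#\{\text{representations of } N \text{ as a sum of distinct } F_i \text{ with } 2 \le i \le j\}.
\]
Since $H < F_{m_0 + 1}$, one has $R(H) = G(H, m_0)$. Everything will flow from the obvious splitting identity $G(N,j) = G(N, j-1) + G(N - F_j, j-1)\cdot[F_j \le N]$, together with the crude max-sum bound $F_2 + F_3 + \cdots + F_j = F_{j+2} - 2$, which forces $G(N, j) = 0$ whenever $N > F_{j+2} - 2$. This vanishing will be the lever that collapses almost every branch of the recursion.

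First, splitting $G(H, m_0)$ on $F_{m_0}$ yields $R(H) = G(H, m_0 - 1) + R(x_1)$, where $G(x_1, m_0 - 1) = R(x_1)$ because $x_1 < F_{m_0}$. The max-sum bound then forces any representation counted by $G(H, m_0 - 1)$ to include $F_{m_0 - 1}$ (otherwise its sum would be at most $F_{m_0} - 2 < H$), so $G(H, m_0 - 1) = G(F_{m_0 - 2} + x_1, m_0 - 2)$. Setting $B_s := G(F_{m_0 - 2s} + x_1, m_0 - 2s)$ for $s \ge 1$ and repeating the split-and-force manoeuvre, I would establish the clean recurrence
\[
B_s = G(x_1,\, m_0 - 2s - 1) + B_{s+1}.
\]
The second summand comes from forcing inclusion of $F_{m_0 - 2s - 1}$ (using the max-sum bound on indices $\le m_0 - 2s - 2$) and the identity $F_{m_0 - 2s} - F_{m_0 - 2s - 1} = F_{m_0 - 2s - 2}$. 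So long as $m_0 - 2s - 1 \ge m_1$, the first summand collapses to $R(x_1)$, and the recurrence simply accumulates copies of $R(x_1)$.

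To close the argument, set $g := m_0 - m_1$, so $t_1 = \lfloor (g+2)/2 \rfloor$ and $\eps_1 = 2t_1 - 1 - g \in \{0,1\}$. If $g = 2d+1$ is odd, I would iterate through $s = 1, \ldots, d$ to collect $(d+1)R(x_1)$ in $R(H)$, then check $B_{d+1} = G(F_{m_1 - 1} + x_1, m_1 - 1) = G(F_{m_1 + 1} + x_2, m_1 - 1) = 0$ by the max-sum bound; this yields $R(H) = t_1 R(x_1)$, matching $\eps_1 = 0$. If $g = 2d$ is even, the recurrence terminates at $B_d = G(x_1, m_1 - 1) + B_{d+1}$ with $B_{d+1} = 0$ (same max-sum bound); then splitting $R(x_1) = G(x_1, m_1) = G(x_1, m_1 - 1) + R(x_2)$ gives $G(x_1, m_1 - 1) = R(x_1) - R(x_2)$, so $R(H) = t_1 R(x_1) - R(x_2)$, matching $\eps_1 = 1$. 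The main obstacle is the bookkeeping: verifying at each iteration that the "must include $F_{m_0 - 2s - 1}$" forcing really is justified by the appropriate max-sum inequality, and identifying precisely the boundary where $G(x_1, \cdot) = R(x_1)$ first fails—this is the point that separates the two parities and produces the single unified formula.
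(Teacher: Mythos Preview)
Your argument is correct. The restricted counter $G(N,j)$, the splitting identity, and the max-sum bound $F_2+\cdots+F_j=F_{j+2}-2$ combine exactly as you describe: each application of ``split on $F_{m_0-2s}$, then force $F_{m_0-2s-1}$'' produces one copy of $G(x_1,m_0-2s-1)$ and advances $B_s\to B_{s+1}$, and the parity of $g=m_0-m_1$ determines whether the last surviving summand is $R(x_1)$ or $R(x_1)-R(x_2)$. The termination checks $B_{d+1}=0$ in both parities are justified by the max-sum bound as you indicate, and the identification $G(x_2,m_1-1)=R(x_2)$ uses $x_2<F_{m_2+1}\le F_{m_1-1}$. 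The only places to be slightly careful are the extreme cases $g=2$ (where the accumulation loop $s=1,\ldots,d-1$ is empty but the formula still holds) and $m_1=2$ (where $G(\cdot,1)$ and $G(\cdot,0)$ degenerate to indicator functions of $0$, but $B_{d+1}$ is still $0$ since $x_1\ge 1$); both are routine.

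As for comparison with the paper: the paper does not prove this lemma at all. It is simply quoted as \cite[Theorem~4]{Rob1996} and used as a black box to launch the induction for Theorem~\ref{thm1}. Your write-up therefore supplies a self-contained combinatorial proof where the paper relies on the literature. In spirit your method is closest to the paper's own proof of Lemma~\ref{KeyRecursion} for the summatory function---both arguments classify representations by their largest part and exploit the Fibonacci recursion $F_{j}-F_{j-1}=F_{j-2}$---but Lemma~\ref{KeyRecursion} is a single step whereas you iterate the descent until the leading Zeckendorf digit of $x_1$ is reached, which is precisely what is needed to obtain the coefficients $t_1$ and $\eps_1$.
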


We induct on $\ell$ to show that if $\ell = 1,2,\ldots,k$ then
\begin{equation} \label{SuperRobbins}
R(H) = a_\ell R(x_\ell) - \eps_\ell a_{\ell - 1} R(x_{\ell+1}).
\end{equation}
The base case $\ell = 1$ is Lemma \ref{Robbins}. Now suppose $1 \le \ell \le k-1$, and that \eqref{SuperRobbins} holds. Then
\begin{align*}
R(H) &= a_\ell R(x_\ell) - \eps_\ell a_{\ell - 1} R(x_{\ell+1}) \\
&= a_\ell (t_{\ell+1} R(x_{\ell+1}) - \eps_{\ell + 1} R(x_{\ell + 2}))
- \eps_\ell a_{\ell - 1} R(x_{\ell+1}) \\
&= a_{\ell+1} R(x_{\ell+1}) - \eps_{\ell + 1} a_\ell R(x_{\ell + 2}),
\end{align*}
which is \eqref{SuperRobbins} with $\ell + 1$ in place of $\ell$. Thus, we have established \eqref{SuperRobbins} by induction.

For $k \ge 1$, applying \eqref{SuperRobbins} with $\ell = k$, and then applying \eqref{Carlitz} with $m = m_k$, gives
\[
R(H) = a_k R(m_k) - \eps_k a_{k-1} = a_k \lfloor m_k / 2 \rfloor  - \eps_k a_{k-1}.
\]
The $k = 0$ case of \eqref{ExactFormula} is \eqref{Carlitz}, which was already established by Carlitz \cite[Theorem 2]{Car1968}. This completes the proof of Theorem \ref{thm1}.

\section{The summatory function} \label{S3}

In this section, we prove Theorem \ref{SuperDuperRecursion}.

\subsection{A combinatorial recursion}

Recall that the Fibonacci sequence enjoys the recursive relation $F_{m+1} = F_m + F_{m-1}$. Our starting point is the following recursion for the summatory function $A(H)$.

\begin{lemma} \label{KeyRecursion} If $m \in \bZ_{\ge 3}$ and $F_m  \le H < F_{m+1}$ then
\[
A(H) = A(H-F_m) + A(H - F_{m-1}) - A(H - 2F_{m-1}) + 2^{m-3}.
\]
\end{lemma}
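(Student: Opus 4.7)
The plan is to exploit the combinatorial meaning of $A(H)$: it counts the subsets $S$ of the set $\{F_i : i \ge 2\}$ of distinct Fibonacci values with $\sum_{F \in S} F \le H$. Under the hypothesis $F_m \le H < F_{m+1}$, any such $S$ lies inside $\{F_2, \ldots, F_m\}$. The key step is to partition these subsets according to the joint membership of $F_m$ and $F_{m-1}$, noting that since $F_m + F_{m-1} = F_{m+1} > H$, the subset $S$ cannot contain both. This leaves three cases, which I expect to match the three non-trivial terms on the right-hand side: $A(H-F_m)$, $A(H - F_{m-1}) - A(H - 2F_{m-1})$, and $2^{m-3}$.

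The cases ``$F_m \in S$'' and ``$F_m, F_{m-1} \notin S$'' are the easy ones. In the first, I would write $S = \{F_m\} \cup S'$ and observe that $\sum S' \le H - F_m < F_{m-1}$ automatically forces $S' \subseteq \{F_2, \ldots, F_{m-2}\}$, giving $A(H-F_m)$ choices for $S'$. In the third case, any $S \subseteq \{F_2, \ldots, F_{m-2}\}$ has sum at most $F_2 + F_3 + \cdots + F_{m-2} = F_m - 2 \le H$, so the sum constraint is vacuous and all $2^{m-3}$ subsets contribute.

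The remaining case ``$F_m \notin S$, $F_{m-1} \in S$'' is the one requiring inclusion--exclusion and is where I expect the main bookkeeping obstacle to lie. Writing $S = \{F_{m-1}\} \cup S'$, I need to count $S' \subseteq \{F_2, \ldots, F_{m-2}\}$ with $\sum S' \le H - F_{m-1}$. The quantity $A(H - F_{m-1})$ counts all Fibonacci subsets with sum at most $H - F_{m-1}$; the bound $H - F_{m-1} < F_m$ forces every such subset into $\{F_2, \ldots, F_{m-1}\}$, so the only ``illegal'' ones are precisely those containing $F_{m-1}$. Removing $F_{m-1}$ from such an illegal subset yields a bijection with the subsets counted by $A(H - 2F_{m-1})$, interpreting $A(n) = 0$ for $n < 0$. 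Hence this case contributes $A(H - F_{m-1}) - A(H - 2F_{m-1})$, and summing the three contributions yields the recursion. The main obstacle is purely bookkeeping: ensuring that the chain of strict inequalities $H - F_m < F_{m-1}$, $H - F_{m-1} < F_m$, and $H - 2F_{m-1} < F_{m-2}$, all consequences of $H < F_{m+1} = F_m + F_{m-1}$, correctly confine the allowed Fibonacci indices in each case so that the subsets counted by $A$ on the right-hand side are exactly the ones needed.
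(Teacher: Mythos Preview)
Your proposal is correct and follows essentially the same approach as the paper's proof: both interpret $A(H)$ as counting subsets of $\{F_2,\ldots,F_m\}$ with sum at most $H$ and split into the same three cases (the paper organises them by the value of the largest element $x_s$, you by the joint membership of $F_m$ and $F_{m-1}$, but since $F_m+F_{m-1}>H$ these partitions coincide). The inclusion--exclusion in the middle case and the telescoping identity $F_2+\cdots+F_{m-2}=F_m-2$ for the third case are used identically in both arguments.
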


\begin{proof} Observe that $A(H)$ counts tuples 
$
(x_1,\ldots,x_s)
$
of Fibonacci numbers such that 
\[
s \ge 0, \qquad
x_1 < \cdots < x_s, \qquad x_1 + \cdots + x_s \le H.
\]
Note that $x_1,\ldots,x_s \in \{ F_2, \ldots, F_m \}$, since $F_1= F_2$. There are $A(H-F_m)$ such tuples for which $x_s = F_m$, since $H - F_m < F_m$. 

If $x_s = F_{m-1}$, then we have
\[
x_1 + \cdots + x_{s-1} \le H - F_{m-1} < F_m < 2F_{m-1}.
\]
There would be $A(H-F_{m-1})$ solutions to this if $x_{s-1}$ were allowed to equal $F_{m-1}$, but since $x_{s-1} < x_s$ this is forbidden, and we need to subtract $A(H- 2F_{m-1})$. Thus, there are 
\[
A(H - F_{m-1}) - A(H - 2F_{m-1})
\]
valid tuples for which $x_s = F_{m-1}$.

Finally, if $x_1 < x_2 < \ldots < x_s \le F_{m-2}$ are Fibonacci numbers, then we always have
\[
x_1 + \cdots + x_s \le F_2 + \cdots + F_{m-2} < F_m \le H,
\]
owing to the well-known identity 
\begin{equation*}
F_1 + F_2 + \cdots + F_{n-2} = F_n - 1 \qquad (n \in \bN),
\end{equation*}
the proof of which is a straightforward exercise in mathematical induction. As there are $2^{m-3}$ subsets of $ \{ F_2, \ldots, F_{m-2} \}$, there are $2^{m-3}$ valid tuples for which $x_s \le F_{m-2}$.

Summing the contributions from the three cases completes the proof of the lemma.
\end{proof}

Next, we provide a simple argument to show that
\begin{equation} \label{OrderOfMagnitude}
A(H) \asymp 	H^{\lam},
\end{equation}
recalling our notational convention that this describes the asymptotic order of magnitude as $H \to +\infty$. Let $m \ge 4$ be an integer. If $m$ is odd then, by Lemma \ref{KeyRecursion}, we have
\begin{align*}
A(F_m) &= 2^{m-3} + 1 + A(F_{m-2}) = 
\ldots \\
& = (2^{m-3} + 1) + (2^{m-5} + 1) + \cdots + (2^2 + 1) + A(F_3) \\
& = (1 + 2^2 + \cdots + 2^{m-3}) + (m+1)/2 \\
&= \frac{2^{m-1}-1}3 + \frac{m+1}2 = \Bigg \lfloor\frac{2^m}6 + \frac{m+1}2 \Bigg \rfloor.
\end{align*}
Similarly, when $m \ge 4$ is even we reach the same eventual conclusion, and we can check directly that it also holds when $m=2,3$. Thus, we have
\begin{equation} \label{FibonacciExact}
A(F_m) = \Bigg \lfloor \frac{2^m}6 + \frac{m+1}2 \Bigg \rfloor  \qquad (m \ge 2)
\end{equation}
and
\begin{equation} \label{FibonacciAsymptotic}
A(F_m) \sim \frac{2^m}6.
\end{equation}
Therefore
\[
A(F_m) \sim c F_m^\lam, \qquad c = \frac16 \sqrt5^\lam.
\]

Note also that if $F_m \le H < F_{m+1}$ then
\[
\varphi^m (1+o(1)) = F_m \sqrt 5 \le H  \sqrt 5
< F_{m+1}  \sqrt 5 = \varphi^{m+1} (1+o(1)),
\]
and consequently
\[
A(H) < A(F_{m+1}) = \frac{2^{m+1}}6 (1+o(1)) \le \frac13  (H \sqrt 5)^{\lam} (1+o(1))
\]
and
\[
A(H) \ge A(F_m) = \frac{2^m}6 (1+o(1)) \ge \frac1{12} (H \sqrt 5)^\lam (1+o(1)).
\]
These calculations furnish \eqref{OrderOfMagnitude}, in the stronger form
\[
c/2 \le c_1 \le c_2 \le 2c.
\]

\begin{example} By Lemma \ref{KeyRecursion}, as $m \to \infty$ we have
\begin{align*}
A(2F_{m-1}) &= A(2F_{m-1} - F_m) + A(F_{m-1}) - A(0) + 2^{m-3} \\
&= A(F_{m-3}) + A(F_{m-1}) + 2^{m-3} - 1 
\sim \frac{11}{48}2^m \\
&\sim \frac{11}{24} (F_{m-1} \sqrt 5)^{\lam}
= \frac{11 (\sqrt5/2)^\lam}{24}(2F_{m-1})^\lam,
\qquad \frac{11 (\sqrt5/2)^\lam}{24} \approx 0.538,
\end{align*}
and
\begin{align*}
A(F_m+ F_{m-2}) &= A(F_{m-2}) + A(2F_{m-2}) - A(F_{m-4}) + 2^{m-3} \\
&\sim \frac{2^{m-2}}6 + \frac{11}{48}2^{m-1} - \frac{2^{m-4}}6 + 2^{m-3}
\sim \frac{13}{48}2^m \\
&\sim \frac{13}{48}\left( \sqrt 5  \frac{F_m + F_{m-2}}{1 + \varphi^{-2}}\right)^{\lam} 
= \frac{13 \sqrt 5^\lam}{48 (1 + \varphi^{-2})^\lam} (F_m + F_{m-2})^\lam \\
&= \frac{13 \varphi^\lam}{48}  (F_m + F_{m-2})^\lam = \frac{13}{24} (F_m + F_{m-2})^\lam, \qquad \frac{13}{24} \approx 0.542.
\end{align*}
\end{example}

\subsection{An exact formula for the summatory function}

Recall \eqref{fdef}. Applying Lemma \ref{KeyRecursion} several times provides the following more elaborate recursion.

\begin{lemma} \label{SuperRecursion} If $t \ge 2$, $m \ge 2t$, and $F_{m-2t+1} \le x < F_{m-2t + 3}$, then
\[
A(F_m + x) = tA(x) - A(x - F_{m-2t + 2}) + f(t) 2^{m-2t}.
\]
\end{lemma}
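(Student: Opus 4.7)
The plan is to prove Lemma \ref{SuperRecursion} by induction on $t \ge 2$, iteratively applying Lemma \ref{KeyRecursion}. Throughout, I would use the convention $A(n) = 0$ for $n < 0$, which is consistent with the definition $A(H) = \sum_{n=0}^H R(n)$ as an empty sum.

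For the base case $t = 2$ (so $m \ge 4$ and $F_{m-3} \le x < F_{m-1}$), I would first apply Lemma \ref{KeyRecursion} to $F_m + x$, which lies in $[F_m, F_{m+1})$ since $x < F_{m-1}$, obtaining $A(F_m + x) = A(x) + A(F_{m-2} + x) - A(x - F_{m-3}) + 2^{m-3}$. Next, noting that $F_{m-2} + x \in [F_{m-1}, F_m)$, I would apply Lemma \ref{KeyRecursion} a second time after writing $F_{m-2} + x = F_{m-1} + (x - F_{m-3})$ and using $F_{m-3} + F_{m-4} = F_{m-2}$, to get $A(F_{m-2} + x) = A(x - F_{m-3}) + A(x) - A(x - F_{m-2}) + 2^{m-4}$. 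Substituting and noting $f(2) = 3$ with $2^{m-3} + 2^{m-4} = 3 \cdot 2^{m-4}$ establishes the base case.

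For the inductive step, assume the lemma for some $t \ge 2$ and take $x$ with $F_{m-2t-1} \le x < F_{m-2t+1}$ (so $m \ge 2t + 2$). Since $t \ge 2$ we have $x < F_{m-2t+1} \le F_{m-3}$, so $A(x - F_{m-3}) = 0$. Applying Lemma \ref{KeyRecursion} to $F_m + x \in [F_m, F_{m+1})$ then gives $A(F_m + x) = A(x) + A(F_{m-2} + x) + 2^{m-3}$. The crucial observation is that the constraint $F_{m-2t-1} \le x < F_{m-2t+1}$ is exactly $F_{(m-2) - 2t + 1} \le x < F_{(m-2) - 2t + 3}$, so the inductive hypothesis applies with $m$ replaced by $m-2$, yielding $A(F_{m-2} + x) = t A(x) - A(x - F_{m-2t}) + f(t) 2^{m-2t-2}$. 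Combining, the claim for $t+1$ reduces to the identity $f(t+1) - f(t) = 2^{2t-1}$, which follows from $f(t) = 1 + \tfrac{2(4^{t-1} - 1)}{3}$ via $f(t+1) - f(t) = \tfrac{2(4^t - 4^{t-1})}{3} = 2 \cdot 4^{t-1} = 2^{2t-1}$.

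The calculations are elementary; the main task is careful bookkeeping of Fibonacci intervals to ensure Lemma \ref{KeyRecursion} is applied with the correct parameters at each step, together with the verification of the telescoping identity for $f$. What makes the induction close cleanly is that the range condition on $x$ is preserved under the shift $m \mapsto m-2$, while the outer Lemma \ref{KeyRecursion} contributes exactly one extra copy of $A(x)$ and the expected power-of-two term.
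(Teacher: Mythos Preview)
Your proof is correct and essentially the same as the paper's: both establish the base case $t=2$ by two applications of Lemma~\ref{KeyRecursion} (so that the $A(x-F_{m-3})$ terms cancel), and both carry out the inductive step by applying Lemma~\ref{KeyRecursion} once to $F_m+x$, observing that $A(x-F_{m-3})=0$ since $x<F_{m-3}$, and then invoking the induction hypothesis with $m$ replaced by $m-2$. The only cosmetic differences are that the paper indexes the induction as ``assume $t-1$, prove $t$'' and performs the $f$-arithmetic inline rather than isolating the identity $f(t+1)-f(t)=2^{2t-1}$.
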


\begin{proof} For the base case $t=2$ of our induction, for $m \ge 4$ and
$F_{m-3} \le x < F_{m-1}$ we have
\begin{align*}
A(F_m + x) &= A(x) + A(F_{m-2} + x) - A(x- F_{m-3}) + 2^{m-3} \\
&= 2A(x) - A(x - F_{m-2}) + 2^{m-4} + 2^{m-3} = 2A(x) - A(x - F_{m-2})+ \frac{3}{16} 2^m\\
&= 2A(x) - A(x - F_{m-2})+ f(2) 2^{m-4}.
\end{align*}
Now let $t \ge 3$, and suppose the result holds with $t-1$ in place of $t$. Then for $m \ge 2t$ and $x \in [F_{m-2t+1}, F_{m-2t + 3})$ we have
\begin{align*}
A(F_m + x) &= A(x) + A(F_{m-2} + x) + 2^{m-3} \\
&= t A(x) - A(x - F_{m - 2 - 2(t-1) + 2})
+ \left(1 + \frac{2(4^{t-2}-1)}3 \right) 2^{m-2-2(t-1)} + 2^{m-3} \\
&= tA(x) - A(x - F_{m-2t + 2}) + \left(1 + \frac{2^{2t-3}-2}3 + 2^{2t-3} \right) 2^{m - 2t} \\
&= tA(x) - A(x - F_{m-2t + 2}) + \left(1 + \frac{2(4^{t-1}-1)}3 \right) 2^{m-2t}\\
&= tA(x) - A(x - F_{m-2t + 2}) + f(t) 2^{m-2t}.
\end{align*}
\end{proof}

The following immediate consequence is analogous to Lemma \ref{Robbins}.

\begin{cor} \label{SuperRecursion2} Let $t \ge 2$, $m \ge 2t$, and $F_{m-2t+1} \le x < F_{m-2t + 3}$. Set
\[
(\eps, y) = \begin{cases}
(1, x -  F_{m-2t+2}) &\text{if } F_{m-2t+2} \le x < F_{m-2t + 3} \\
(0,  x -  F_{m-2t+1}) &\text{if } F_{m-2t+1} \le x < F_{m-2t + 2}.
\end{cases}
\]
Then
\[
A(F_m + x) = tA(x) - \eps A(y) + f(t) 2^{m-2t}.
\]
\end{cor}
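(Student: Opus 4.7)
The plan is to deduce this immediately from Lemma \ref{SuperRecursion} by splitting the interval $[F_{m-2t+1}, F_{m-2t+3})$ into the two subintervals $[F_{m-2t+2}, F_{m-2t+3})$ and $[F_{m-2t+1}, F_{m-2t+2})$, corresponding respectively to $\eps = 1$ and $\eps = 0$. The only extra input needed, beyond the Lemma, is that $A$ vanishes on negative integers, which follows directly from the definition $A(H) = \sum_{n=0}^{H} R(n)$ and the convention that empty sums are $0$.

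In the first case $F_{m-2t+2} \le x < F_{m-2t+3}$, I would set $\eps = 1$ and $y = x - F_{m-2t+2} \ge 0$. Then $A(x - F_{m-2t+2}) = A(y)$, and substituting into the conclusion of Lemma \ref{SuperRecursion} yields exactly
\[
A(F_m + x) = t A(x) - \eps A(y) + f(t) 2^{m-2t},
\]
as required.

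In the second case $F_{m-2t+1} \le x < F_{m-2t+2}$, the argument $x - F_{m-2t+2}$ is a negative integer, so $A(x - F_{m-2t+2}) = 0$ by the empty-sum convention. Lemma \ref{SuperRecursion} therefore collapses to $A(F_m + x) = t A(x) + f(t) 2^{m-2t}$, which agrees with the corollary once we set $\eps = 0$ (killing the $\eps A(y)$ term regardless of the choice of $y$).

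There is no real obstacle here; the verification is pure bookkeeping. The only subtle point is the apparently gratuitous definition of $y = x - F_{m-2t+1}$ in the $\eps = 0$ branch. This value is not used in the identity of the corollary itself, since it is multiplied by $\eps = 0$, but it is chosen to lie in $[0, F_{m-2t})$ so that in the subsequent proof of Theorem \ref{SuperDuperRecursion} the Corollary can be iterated cleanly along the Zeckendorf digits of $H$.
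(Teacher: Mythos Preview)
Your proof is correct and matches the paper's approach exactly: the paper offers no proof of this corollary beyond calling it ``the following immediate consequence'' of Lemma~\ref{SuperRecursion}, and you have correctly supplied the two-case verification, including the point that $A(H)=0$ for $H<0$ by the empty-sum convention (which the paper does state explicitly for $A$ defined on $H\in\bZ$). Your closing remark about the role of $y=x-F_{m-2t+1}$ in the $\eps=0$ branch is also accurate: it is precisely $x_{\ell+1}$ in the Zeckendorf iteration, which is why the paper defines it even though it is annihilated by $\eps$ in the identity itself.
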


\bigskip

We now establish \eqref{GeneralRecursion} for $1 \le \ell \le k$. For the base case $\ell = 1$ of our induction, we know from Corollary \ref{SuperRecursion2} that
\[
A(H) = a_1 A(x_1) - \eps_1 a_0 A(x_2) + 
a_0 f(t_1) 2^{m_0 - 2t_1}.
\]
Now suppose that for some $\ell \in \{1,2,\ldots,k-1\}$ we have \eqref{GeneralRecursion}. Then
\begin{align*}
A(H) &= a_\ell (t_{\ell+1} A(x_{\ell+1}) - \eps_{\ell+1} A(x_{\ell+2}) + f(t_{\ell+1}) 2^{m_\ell - 2t_{\ell+1}}) - \eps_\ell a_{\ell-1} A(x_{\ell +1}) \\
&\qquad
+ \sum_{i \le \ell} a_{i-1} f(t_i) 2^{m_{i-1} - 2t_1} \\
&= a_{\ell+1} A(x_{\ell+1}) - \eps_{\ell+1} a_\ell A(x_{\ell+2}) 
+ \sum_{i \le \ell +1} a_{i-1} f(t_i) 2^{m_{i-1} - 2t_i}.
\end{align*}
We have proved \eqref{GeneralRecursion} by induction on $\ell$. 

\bigskip

For $k \ge 1$, inserting \eqref{FibonacciExact} into the $\ell = k$ case of \eqref{GeneralRecursion} yields \eqref{FinalFormula}. Meanwhile, the $k = 0$ case of \eqref{FinalFormula} is precisely \eqref{FibonacciExact}. This completes the proof of Theorem \ref{SuperDuperRecursion}.

\begin{example} Let $m$ be large, and consider
\[
H = F_m + F_{m-7} + F_{m-12} + F_{m-19}.
\]
In this case
\[
t_1 = 4, \quad t_2 = 3, \quad t_3 = 4, \qquad \eps_0 = \eps_1 = \eps_2 = 0.
\]
Therefore
\begin{align*}
A(H) &= 48 \Bigg \lfloor \frac{2^{m-19}}6 + \frac{m-18}2 \Bigg \rfloor
+ f(4) 2^{m-8} + 4 f(3) 2^{m-13} + 12 f(4) 2^{m-20} \\
&\sim (16 + 43 \times 2^{12} + 44 \times 2^7 + 12 \times 43) 2^{m-20} = \frac{45573}{262144} 2^m.
\end{align*}
Thus, as $m \to \infty$, we have
\[
\frac{A(H)}{H^\lam} \to \frac{45573}{262144} 
\left( \frac{\sqrt 5}{1 + \varphi^{-7} + \varphi^{-12} + \varphi^{-19}} \right)^\lam \approx 0.525352.
\]
\end{example}

\section{Subdivision} \label{S4}

In this section, we prove Theorem \ref{AsymptoticBehaviour}. Let $m$ be a large positive integer. We subdivide the discrete interval $[F_m, F_{m+1}) \cap \bZ$ into subintervals 
\[
[p_j, p_{j+1}) \cap \bZ \qquad (0 \le j \le 317810)
\]
according to the initial Zeckendorf digits. The left endpoints $p_0, \ldots, p_{317810}$ are given by
\[
 F_m + \sum_{i \le \ell} F_{m - a_i},
\]
where
\[
\ell \ge 0, \qquad a_1, a_2 - a_1, \ldots, a_\ell - a_{\ell - 1} \ge 2, 
\qquad a_\ell \le 27.
\]
The right endpoints have the same form, except $p_{317811} = F_{m+1}$. Using Theorem \ref{SuperDuperRecursion}, we can show that
\[
A(p_j) \sim v_j 2^m, \qquad p_j \sim w_j \varphi^m
\qquad (0 \le j \le 317811)
\]
as $m \to \infty$, for some computable values of $v_j$ and $w_j$. Then
\[
(1+o(1))L_j \le \frac{A(H)}{H^\lam} \le (1+o(1))U_j 
\qquad (p_j \le H < p_{j+1}),
\]
where
\[
L_j = \frac{v_j}{w_{j+1}^\lam},
\quad
U_j = \frac{v_{j+1}}{w_j^\lam}
\qquad (0 \le j \le 317810).
\]

We carried out these computations using the software \emph{Mathematica} \cite{Wolfram}; the code is provided in Appendix \ref{Acode}. Then
\[
c_1 \ge \min_j L_j, \qquad c_2 \le \max_j U_j.
\]
The software also told us which subintervals attaining the least $L_j$ and the greatest $U_j$, namely
\[
j = 19401, \qquad (a_1,\ldots,a_\ell) = (7,12,18,25)
\]
and
\[
j = 184839, \qquad (a_1,\ldots,a_\ell) = (3,5,8,10,12,16,18,21,23,26),
\]
respectively. Since
\[
\frac{A(p_j)}{p_j^\lam} \sim \frac{v_j}{w_j^\lam},
\]
we thereby also obtained an upper bound for $c_1$ and a lower bound for $c_2$. These calculations delivered \eqref{close}, completing the proof of Theorem \ref{AsymptoticBehaviour}.

\appendix

\section{Code for $R(H)$} \label{Rcode}

Lines 2--7 are \emph{Rosetta code} \cite{Rosetta}, available for general use under the GNU Free Documentation License, version 1.2. The value of $H$ in the first line can be changed. \\

\begin{Verbatim} [fontsize=\scriptsize]
H = 1234;
zeckendorf[0] = 0;
zeckendorf[n_Integer] := 
  10^(# - 1) + zeckendorf[n - Fibonacci[# + 1]] &@
   LengthWhile[
    Fibonacci /@ 
     Range[2, Ceiling@Log[GoldenRatio, n Sqrt@5]], # <= n &];
Z = IntegerDigits[zeckendorf[H]];
l = Total[Z];
X = ConstantArray[0, l];
t = 1;
If[l == 1, Floor[(Length[Z] + 1)/2],
 For[i = 1, i < Length[Z] + 1, i++,
  If[Z[[i]] == 1, X[[t]] = Length[Z] - i + 2; t++;,]
  ];
 T = ConstantArray[0, l - 1];
 Ep = ConstantArray[0, l - 1];
 For[i = 1, i < l, i++,
  T[[i]] = Floor[(X[[i]] - X[[i + 1]] + 2)/2];
  Ep[[i]] = 2 T[[i]] - 1 - X[[i]] + X[[i + 1]];
  ];
 a = ConstantArray[1, l];
 a[[2]] = T[[1]];
 For[i = 3, i < l + 1, i++,
  a[[i]] = T[[i - 1]] a[[i - 1]] - Ep[[i - 2]] a[[i - 2]]
  ];
 a[[l]]*Floor[X[[l]]/2] - a[[l - 1]]*Ep[[l - 1]]
 ]
\end{Verbatim}

\section{Code for $A(H)$} \label{Acode}

\begin{Verbatim} [fontsize=\scriptsize]
P = (1 + Sqrt[5])/2;
L = Log[2]/Log[P];
l = 27;
X = ConstantArray[0, {Fibonacci[l + 1], Floor[l/2]}];
t = 1;
X[[2, 1]] = l;
For[i = 3, i < Fibonacci[l + 1] + 1, i++,
 If[X[[i - 1, t]] == l || X[[i - 1, t]] == l - 1,
  If[(t > 1 && X[[i - 1, t]] - X[[i - 1, t - 1]] == 2),
   t--;
   For[j = t, j > 0, j--,
    If[j == 1, t = j;
     For[k = 1, k < t, k++, X[[i, k]] = X[[i - 1, k]]];
     X[[i, t]] = X[[i - 1, t]] - 1; j = 0, 
     If[X[[i - 1, j]] - X[[i - 1, j - 1]] != 2, t = j;
      For[k = 1, k < t, k++, X[[i, k]] = X[[i - 1, k]]];
      X[[i, t]] = X[[i - 1, t]] - 1; j = 0]
     ]
    ]
   ,
   X[[i]] = X[[i - 1]];
   X[[i, t]]--;
   ],
  t++;
  X[[i]] = X[[i - 1]];
  X[[i, t]] = l;
  ]
 ]
T = ConstantArray[0, {Fibonacci[l + 1], Floor[l/2]}];
For[i = 2, i < Fibonacci[l + 1] + 1, i++,
 T[[i, 1]] = Floor[(X[[i, 1]] + 2)/2]
 ]
For[i = 2, i < Fibonacci[l + 1] + 1, i++,
 For[j = 2, j < Floor[l/2] + 1, j++,
  If[X[[i, j]] == 0, , 
   T[[i, j]] = Floor[(X[[i, j]] - X[[i, j - 1]] + 2)/2]
   ]
  ]
 ]
Ep = ConstantArray[0, {Fibonacci[l + 1], Floor[l/2]}];
For[i = 2, i < Fibonacci[l + 1] + 1, i++,
 Ep[[i, 1]] = 2 T[[i, 1]] - 1 - X[[i, 1]]
 ]
For[i = 2, i < Fibonacci[l + 1] + 1, i++,
 For[j = 2, j < Floor[l/2] + 1, j++,
  If[X[[i, j]] == 0, , 
   Ep[[i, j]] = 2 T[[i, j]] - 1 - X[[i, j]] + X[[i, j - 1]]
   ]
  ]
 ]
a = ConstantArray[0, {Fibonacci[l + 1], Floor[l/2] + 1}];
For[i = 1, i < Fibonacci[l + 1] + 1, i++, a[[i, 1]] = 1];
For[i = 2, i < Fibonacci[l + 1] + 1, i++, a[[i, 2]] = T[[i, 1]]];
For[i = 2, i < Fibonacci[l + 1] + 1, i++,
 For[j = 3, j < Floor[l/2] + 2, j++,
  If[X[[i, j - 1]] == 0, , 
   a[[i, j]] = 
    T[[i, j - 1]] a[[i, j - 1]] - Ep[[i, j - 2]] a[[i, j - 2]]
   ]
  ]
 ]
f = Function[t, 1 + (2/3) (4^(t - 1) - 1)];  
k = ConstantArray[0, {Fibonacci[l + 1] + 1}];
k[[1]] = (1/6);
k[[Fibonacci[l + 1] + 1]] = (1/3);
For[i = 2, i < Fibonacci[l + 1] + 1, i++,
 For[j = Floor[l/2], j > 0, j--,
  If[X[[i, j]] == 0, ,
   k[[i]] = (a[[i, j + 1]]/(6*2^(X[[i, j]]))) + 
     Sum[(a[[i, l]] f[T[[i, l]]]/2^(X[[i, l - 1]] + 2 T[[i, l]])), {l,
        2, j}] + (a[[i, 1]] f[T[[i, 1]]]/2^(2 T[[i, 1]]));
   j = 0
   ]
  ]
 ]
p = ConstantArray[0, {Fibonacci[l + 1] + 1}];
p[[1]] = 1; p[[Fibonacci[l + 1] + 1]] = P;
For[i = 2, i < Fibonacci[l + 1] + 1, i++,
 For[j = Floor[l/2], j > 0, j--,
  If[X[[i, j]] == 0, ,
   p[[i]] = 1 + Sum[P^(-X[[i, k]]), {k, 1, j}];
   j = 0
   ]
  ]
 ]
LU = ConstantArray[0, {Fibonacci[l + 1], 2}];
For[i = 1, i < Fibonacci[l + 1] + 1, i++,
 LU[[i, 1]] = k[[i]]*(Sqrt[5]/p[[i + 1]])^L;
 LU[[i, 2]] = k[[i + 1]]*(Sqrt[5]/p[[i]])^L
 ] 
NumberForm[N[Min[LU]], 8]
NumberForm[N[Max[LU]], 8]
Position[LU, Min[LU]]
Position[LU, Max[LU]]

X[[19401]]
X[[184839]]
NumberForm[N[k[[19401]]*(Sqrt[5]/p[[19402]])^L], 8]
NumberForm[N[k[[19401]]*(Sqrt[5]/p[[19401]])^L], 8]
NumberForm[N[k[[184840]]*(Sqrt[5]/p[[184839]])^L], 8]
NumberForm[N[k[[184839]]*(Sqrt[5]/p[[184839]])^L], 8]
\end{Verbatim}
 
\providecommand{\bysame}{\leavevmode\hbox to3em{\hrulefill}\thinspace}


\begin{thebibliography}{50}

\bibitem{Andrews} G. E. Andrews, \emph{The theory of partitions}, Cambridge University Press, Cambridge, 1998.

\bibitem{Ard2004} F. Ardila, \emph{On the coefficients of a Fibonacci power series}, Fibonacci Quart. \textbf{42} (2004), 202--204.

\bibitem{Berstel} J. Berstel, \emph{An exercise on Fibonacci representations}, Theoret. Informatics Appl. \textbf{35} (2001), 491--498.

\bibitem{Bro1964} J. L. Brown, Jr., \emph{Zeckendorf's theorem and some applications}, Fibonacci Quart. \textbf{2} (1964), 162--168. 

\bibitem{Car1968} L. Carlitz, \emph{Fibonacci representations}, Fib. Quart. \textbf{6} (1968), 193--220.

\bibitem{CSH1972} L. Carlitz, R. Scoville and V. E. Hoggatt, Jr., \emph{Lucas representations}, Fibonacci Quart. \textbf{10} (1972), 29--42, 70, 112.

\bibitem{DR2018} A. Dunn and N. Robles, \emph{Polynomial partition asymptotics}, J. Appl. Math. Anal. Appl. \textbf{459} (2018), 359--384.

\bibitem{Gafni} A. Gafni, \emph{Power partitions}, J. Number Theory \textbf{163} (2016), 19--42.

\bibitem{HR1918} G. H. Hardy and S. Ramanujan, \emph{Asymptotic formul{\ae} in combinatory analysis}, Proc. London Math. Soc. (2) \textbf{17} (1918), 75--115.

\bibitem{HB1963} V. E. Hoggatt, Jr. and S. L. Basin, \emph{Representations by complete sequences --- Part I (Fibonacci)}, Fibonacci Quart. \textbf{1} (1963), 1--14.

\bibitem{Kla1966}
D. A. Klarner, \emph{Representations of $N$ as a sum of distinct elements from special sequences}, Fib. Quart. \textbf{4} (1966), 289--306, 322.

\bibitem{Kla1968}
D. A. Klarner, \emph{Partitions of $n$ into distinct Fibonacci numbers}, Fibonacci Quart. \textbf{6} (1968), 235--243.

\bibitem{Knuth} D. E. Knuth, \emph{Fibonacci multiplication}, Appl. Math. Lett. \textbf{1} (1988), 57--60.

\bibitem{Kumchev} A. Kumchev, \emph{On the Piatetski-Shapiro-Vinogradov theorem},
J. Th\'eor. Nombres Bordeaux \textbf{9} (1997),
11--23.

\bibitem{Lekkerkerker}  C. G. Lekkerkerker, \emph{Voorstelling van natuurlijke getollen door een som van getallen van Fibonacci}, Simon Stevin \textbf{29} (1952), 190--195. 

\bibitem{Rob1996} N. Robbins, \emph{Fibonacci partitions}, Fibonacci Quart. \textbf{34} (1996), 306--313.

\bibitem{Rosetta}
\url{https://rosettacode.org/wiki/
Zeckendorf_number_representation#Mathematica},
accessed August 13, 2020.

\bibitem{Stockmeyer} P. K. Stockmeyer, \emph{A smooth tight upper bound for the Fibonacci representation function $R(n)$},
Fibonacci Quart. \textbf{46/47} (2008/2009), 103--106.

\bibitem{Vaughan} R C. Vaughan, \emph{On the number of partitions into primes}, Ramanujan J. \textbf{15} (2008), 109--121.

\bibitem{Wei2016} F. V. Weinstein, \emph{Notes on Fibonacci partitions}, Exp. Math. \textbf{25} (2016), 482--499.

\bibitem{Wolfram} Wolfram Research, Inc., Mathematica, Version 12.0, Champaign, IL, 2019.

\bibitem{Zeckendorf} E. Zeckendorf, Repr\'esentation des nombres naturels par une somme de nombres de Fibonacci ou de nombres de Lucas, Bull. Soc. Roy. Sci. Li\`ege \textbf{41} (1972), 179--182. 

\end{thebibliography}
\end{document}